\documentclass[reqno,12pt]{amsart}

\usepackage{bbm}

\setlength{\topmargin}{0mm}
\setlength{\oddsidemargin}{0mm}
\setlength{\evensidemargin}{0mm}
\setlength{\textheight}{225mm}
\setlength{\textwidth}{160mm}

\numberwithin{equation}{section}

\newcommand{\1}{\mathbbm {1}}
\newcommand{\Z}{{\mathbb Z}}
\newcommand{\C}{{\mathbb C}}

\newcommand{\wh}{{\widehat{\mathfrak h}}}
\newcommand{\I}{{\mathcal I}}
\newcommand{\tI}{\widetilde{\I}}
\newcommand{\J}{{\mathcal J}}
\newcommand{\W}{{\mathcal W}}
\newcommand{\tW}{\widetilde{\W}}

\newcommand{\mraff}{\mathrm{aff}}

\newcommand{\la}{\langle}
\newcommand{\ra}{\rangle}

 \DeclareMathOperator{\ad}{ad}
 \DeclareMathOperator{\Ind}{Ind}

\newtheorem{thm}{Theorem}[section]

\newtheorem{rmk}[thm]{Remark}

\begin{document}

\title
{The structure of parafermion vertex operator algebras}

\author[C. Dong]{Chongying Dong}
\address{Department of Mathematics, University of California, Santa Cruz,
CA 95064 \& School of Mathematics, Sichuan University, China} \email{dong@math.ucsc.edu}

\author[C. H. Lam]{Ching Hung Lam}
\address{Department of Mathematics and National Center for Theoretical
Sciences, National Cheng Kung University, Tainan, Taiwan 701}
\email{chlam@mail.ncku.edu.tw}

\author[Q. Wang]{Qing Wang}
\address{School of Mathematical Sciences, Xiamen University, Xiamen 361005,
 China}
\email{qingwang@xmu.edu.cn}

\author[H. Yamada]{Hiromichi Yamada}
\address{Department of Mathematics, Hitotsubashi University, Kunitachi,
Tokyo 186-8601, Japan}
\email{yamada@math.hit-u.ac.jp}

\subjclass[2000]{17B69, 17B65}

\begin{abstract}
It is proved that the parafermion vertex operator algebra associated to the
irreducible highest weight module for the affine Kac-Moody algebra
$A_1^{(1)}$ of level $k$ coincides with a certain $W$-algebra. In particular,
a set of generators for the  parafermion vertex operator algebra is determined.
\end{abstract}

\maketitle

\section{Introduction}

The coset construction initiated in \cite{GKO} is another major way besides
the orbifold theory to construct new conformal field theories from given
ones. The coset constructions have been used to establish the unitarity of
the discrete series for the Viraoro algebras \cite{GKO} and to produce many
important conformal field theories associated to affine Kac-Moody algebras
(see for example \cite{DL, GQ, KL, X1, X2, ZF}). The coset construction in
the language of vertex operator algebra can be stated as follows (see
\cite[Section 5]{FZ}, \cite[Section 3.11]{LL}): Let $V=(V,Y,\1,\omega)$ be
a vertex operator algebra and $U=(U,Y,\1,\omega^1)$ a vertex operator
subalgebra of $V$. Then the commutant $U^c=\{v\in V|u_nv=0 \ for \ u\in U,
n\geq 0\}$ of $U$ in $V$ is another vertex operator subalgebra of $V$ with
Virasoro vector $\omega^2 = \omega-\omega^1$ under a suitable assumption.
The commutant $U^c$ is called the coset vertex operator algebra associated
to the pair $V \supset U$.

The parafermion algebras investigated in \cite{ZF} are essentially the
$Z$-algebras introduced and studied earlier in \cite{LP,LW1,LW2}, as
clarified in \cite{DL}. It was proved that the parafermion algebras are
generalized vertex operator algebras \cite{DL}.  A generalized vertex
operator algebra as a vector space is a direct sum of a vertex operator
algebra with some of its modules satisfying certain conditions. We call the
vertex operator algebras in the parafermion algebras the parafermion vertex
operator algebras, which are denoted by $K_0$ in \cite{DLY2}.

Let $k \ge 2$ be an integer and $L(k,0)$ the level $k$ irreducible highest
weight module for the affine Kac-Moody algebra $A_1^{(1)}$. It turns out
that $K_0$ is exactly the commutant of the Heisenberg vertex operator
subalgebra in the vertex operator algebra $L(k,0)$. Certain $W$-algebras
have been constructed and realized as vertex operator subalgebras of the
commutant $N_0$ of the Heisenberg vertex operator subalgebra in the vertex
operator algebra $V(k,0)$, where $V(k,0)$ denotes the level $k$ Weyl module
for the affine Kac-Moody algebra $A_1^{(1)}$ \cite{BEHHH, H}. We remark
that $L(k,0)$ is the simple quotient of $V(k,0)$ and $K_0$ is the simple
quotient of $N_0$. Motivated by a desire to understand the moonshine vertex
operator algebra \cite{FLM} better, the vertex operator algebras $K_0$ and
$N_0$ have been further investigated in \cite{DLY1, DLY2}. In particular,
several conjectures regarding the generators of $N_0$ and $K_0$, and the
rationality and the $C_2$-cofiniteness of $K_0$ were made.

In this paper we will determine a set of generators for both $N_0$ and
$K_0$, giving a positive answer to a conjecture in \cite{DLY2}. In
particular, we obtain that $N_0$ is in fact equal to the $W$-algebra
$W(2,3,4,5)$ \cite{BEHHH}. We will also prove that the unique maximal ideal
of $N_0$ is generated by a single vector and consequently determine the
structure of the simple quotient $K_0$ of $N_0$. If $k\le 6$, these results
has been obtained in \cite{DLY2}.

It has been conjectured that $K_0$ is a rational and $C_2$-cofinite vertex
operator algebra. Again this has been established in the case $k\le 6$
\cite{DLY2}. It is widely believed that if a vertex operator algebra $V$ is
rational and its subalgebra $U$ is rational, then the commutant $U^c$ of
$U$ in $V$ is also rational. Although the Heisenberg vertex operator
algebra is not rational, the commutant $K_0$ can be regraded as the
commutant of a lattice vertex operator algebra associated to a rank one
lattice inside $L(k,0)$ \cite[Proposition 4.1]{DLY2}. Also, $K_0$ occurs as
the commutant of a tensor product of vertex operator algebras associated to
Virasoro algebras of discrete series in a lattice vertex operator algebra
associated to a lattice of type $\sqrt{2}A_{k-1}$ \cite[Theorem 4.2]{LY}.
These facts should explain why the rationality of $K_0$ is expected. One
can study the commutant of the Heisenberg vertex operator algebra inside
the affine vertex operator algebra for any affine Kac-Moody algebra and
obtain a class of rational vertex operator algebras \cite{GQ}.

The paper is organized as follows. In Section 2, we recall the construction
of the vertex operator algebra $V(k,0)$ associated to the affine Kac-Moody
algebra $A_1^{(1)}$ from \cite{FZ}. Moreover, we consider its subalgebra
$V(k,0)(0)$, which is the kernel of the action of $h$ on $V(k,0)$. Here we
use the standard basis $\{h,e,f\}$ for the Lie algebra $sl_2$. We give a
set of generators for $V(k,0)(0)$. This result is the foundation for the
study of generators for both $N_0$ and $K_0.$ In Section 3, we define
vertex operator algebra $N_0$ and prove that $N_0$ coincides with its
subalgebra $W(2,3,4,5)$ generated by the Virasoro vector and Virasoro
primary vectors $W^i$ of weight $i$ for $i=3,4,5$ by showing that $N_0$ is
in fact generated by the Virasoro vector and $W^3$. This solves a
conjecture given in \cite{DLY2}. Section 4 is devoted to the unique maximal
ideal of $N_0$ and its simple quotient $K_0$. The result in this section is
that the maximal ideal is generated by $f(0)^{k+1}e(-1)^{k+1}\1$.
Furthermore, we prove an important property of the vector
$f(0)^{k+1}e(-1)^{k+1}\1$. These results settle down another conjecture in
\cite{DLY2}. The main idea in proving the results in this section is to use
the highest weight module theory for the finite dimensional simple Lie
algebra $sl_2$.

\section{Vertex operator algebras $V(k,0)$ and $V(k,0)(0)$}
\label{Sect:V(k,0)}

We are working in the setting of \cite{DLY2}. In particular, $\{ h, e, f\}$
is a standard Chevalley basis of $sl_2$ with $[h,e] = 2e$, $[h,f] = -2f$,
$[e,f] = h$ for the bracket, $\la,\ra$ is the normalized Killing form so
that $\la h,h \ra = 2$, $\la e,f \ra = 1$, $\la h,e \ra = \la h,f \ra = \la
e,e \ra = \la f,f \ra = 0$, and $\widehat{sl}_2 = sl_2 \otimes \C[t,t^{-1}]
\oplus \C C$ is the corresponding affine Lie algebra. Moreover, $k \ge 2$
is an integer and
\begin{equation*}
V(k,0) = V_{\widehat{sl}_2}(k,0) = \Ind_{sl_2 \otimes \C[t]\oplus \C
C}^{\widehat{sl}_2}\C
\end{equation*}
is an induced $\widehat{sl}_2$-module such that $sl_2 \otimes \C[t]$ acts
as $0$ and $C$ acts as $k$ on $\1=1$.

We denote by $a(n)$ the operator on $V(k,0)$ corresponding to the action of
$a \otimes t^n$. Then
\begin{equation}\label{eq:affine-commutation}
[a(m), b(n)] = [a,b](m+n) + m \la a,b \ra \delta_{m+n,0}k
\end{equation}
for $a, b \in sl_2$ and $m,n\in \Z$. Note that $a(n)\1 =0$ for $n \ge 0$.
The vectors
\begin{equation}\label{eq:V-basis}
h(-i_1) \cdots h(-i_p) e(-j_1) \cdots e(-j_q) f(-m_1) \cdots f(-m_r)\1,
\end{equation}
$i_1 \ge \cdots \ge i_p \ge 1$, $j_1 \ge \cdots \ge j_q \ge 1$, $m_1 \ge
\cdots \ge m_r \ge 1$ and $p,q,r \ge 0$ form a basis of $V(k,0)$.

Let $a(z) = \sum_{n \in \Z} a(n)z^{-n-1}$. Then $V(k,0)$ is a vertex
operator algebra generated by $a(-1)\1$ for $a\in sl_2$ such that
$Y(a(-1)\1,z) = a(z)$ with the vacuum vector $\1$ and the Virasoro vector
\begin{align*}
\omega_{\mraff} &= \frac{1}{2(k+2)} \Big( \frac{1}{2}h(-1)^2\1 +
e(-1)f(-1)\1 + f(-1)e(-1)\1 \Big)\\
&= \frac{1}{2(k+2)} \Big( -h(-2)\1 + \frac{1}{2}h(-1)^2\1 + 2e(-1)f(-1)\1
\Big)
\end{align*}
of central charge $3k/(k+2)$ \cite{FZ} (see \cite[Section 6.2]{LL} also).
The vector of the form \eqref{eq:V-basis} has weight $i_1 + \cdots + i_p +
j_1 + \cdots + j_q + m_1 + \cdots + m_r$. We also note that the vector of
the form \eqref{eq:V-basis} is an eigenvector for $h(0)$ with eigenvalue
$2(q-r)$.

For $\lambda \in 2\Z$, set
\begin{equation*}
V(k,0)(\lambda)=\{v\in V(k,0)|h(0)v = \lambda v\}.
\end{equation*}
Then we have an eigenspace decomposition for $h(0)$:
\begin{equation}\label{eq:V-dec}
V(k,0)=\oplus_{\lambda \in 2\Z}V(k,0)(\lambda).
\end{equation}
Since $[h(0), Y(u,z)]=Y(h(0)u,z)$ for $u \in V(k,0)$ by the definition of
affine vertex operator algebra, we see that $V(k,0)(0)$ is a vertex
operator subalgebra of $V(k,0)$ with the same Virasoro vector
$\omega_{\mraff}$ and each $V(k,0)(\lambda)$ is a module for $V(k,0)(0)$.

Our first theorem is on a set of generators for $V(k,0)(0)$, which will be
fundamental in the study of generators of $N_0$ and $K_0$ later.

\begin{thm}\label{generator1} The vertex operator algebra
$V(k,0)(0)$ is generated by two vectors $h(-1)\1$ and $f(-2)e(-1)\1$.
\end{thm}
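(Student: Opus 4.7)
Let $U$ denote the vertex operator subalgebra of $V(k,0)$ generated by $h(-1)\1$ and $W := f(-2)e(-1)\1$; the goal is to show $U = V(k,0)(0)$.

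I would first collect several preliminary inclusions that will drive the argument. Since $Y(h(-1)\1, z) = h(z)$, every mode $h(n)$ preserves $U$; in particular $h(-n)\1 \in U$ for all $n \ge 1$, so the rank-one Heisenberg vertex subalgebra generated by $h(-1)\1$ is contained in $U$. A direct bracket computation using $[h(1), f(-2)] = -2f(-1)$ and $h(1)e(-1)\1 = 0$ gives
$$h(1) \cdot W = -2\, f(-1)e(-1)\1,$$
whence $f(-1)e(-1)\1 \in U$. The affine commutator $[e(-1), f(-1)] = h(-2)$ then yields $e(-1)f(-1)\1 = f(-1)e(-1)\1 + h(-2)\1 \in U$, and substituting into the defining formula for $\omega_{\mraff}$ shows $\omega_{\mraff} \in U$. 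Consequently all Virasoro modes $L(n)$, and more generally $u_n$ for every $u \in U$, preserve $U$.

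The main argument is an induction on the number of $e$--$f$ pairs $q$ in a PBW basis vector
$$h(-i_1)\cdots h(-i_p)\, e(-j_1)\cdots e(-j_q)\, f(-m_1)\cdots f(-m_q)\1$$
of $V(k,0)(0)$. The case $q = 0$ is immediate from the Heisenberg inclusion. For $q = 1$, I would extract every single-pair vector $e(-a)f(-b)\1$ from the two seeds $e(-1)f(-1)\1$ and $W$ by applying $L(-n)$ and $h(-n)$ and using the commutator identity $e(-a)f(-b)\1 - f(-b)e(-a)\1 = h(-a-b)\1$, with a secondary induction on $a + b$ and a linear-algebra check that at each weight enough linearly independent combinations are produced (note that $W$ is essential here, since the $L(-1)$-descendants of $e(-1)f(-1)\1$ alone are too symmetric to separate $e(-a)f(-b)\1$ from $e(-b)f(-a)\1$). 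For $q \ge 2$, the key tool is the $(-1)$-product: if $u, v \in U$ then $u_{-1}v \in U$. Taking $u$ a single-pair vector (already in $U$) and $v$ a $(q-1)$-pair basis vector (in $U$ by the outer induction), the Borcherds expansion of $u_{-1}v$ has leading PBW component a prescribed $q$-pair basis vector, while all correction terms strictly reduce the pair count at the same weight, because the OPE contractions between the $e$- and $f$-modes of $u$ and those of $v$ replace an $e$--$f$ pair with an $h$-mode or central constant. These corrections lie in $U$ by induction, so the target $q$-pair vector is recovered; letting $u$ range over single-pair vectors yields every pure $q$-pair basis vector, and applying the operators $h(-i_j)$ captures the general PBW basis modulo lower-order terms already in $U$.

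The principal obstacle is the $q \ge 2$ step: one must verify that the Borcherds expansion of $u_{-1}v$ indeed has leading PBW component the intended basis vector and that every correction genuinely reduces the pair count. Rigorously this requires careful combinatorial bookkeeping of the OPE contractions, organized by a lexicographic ordering on PBW indices or, as an alternative, by comparing the weight-graded dimensions of $U$ and $V(k,0)(0)$ after enough explicit elements have been constructed.
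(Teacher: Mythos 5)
Your overall architecture matches the paper's: the same preliminary computations ($h(1)f(-2)e(-1)\1 = -2f(-1)e(-1)\1$, hence $f(-1)e(-1)\1$ and $\omega_{\mraff}$ lie in $U$), an outer induction on the number of $e$--$f$ pairs, and for the inductive step the $(-1)$-product $u_{-1}v$ of a single-pair vector with a $(q-1)$-pair vector, whose correction terms $f(-m-n-2-i)e(i)$ and $e(-m-n-2-i)f(i)$ ($i\ge 0$) do not increase the pair count. That part of your plan is sound and is exactly how the paper handles $t\to t+1$.

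The genuine gap is in your $q=1$ step. You propose to obtain every $f(-a)e(-b)\1$ from the two seeds $f(-1)e(-1)\1$ and $W=f(-2)e(-1)\1$ by applying $L(-n)$ and $h(-n)$, with ``a linear-algebra check that at each weight enough linearly independent combinations are produced.'' That check fails. The operators $h(-n)$ leave the pure single-pair space (they append an $h$-mode), and $L(-n)$ for $n\ge 2$ produces terms such as $f(-1)e(-1)L(-2)\1 = f(-1)e(-1)\omega_{\mraff}$, which contain two-pair vectors not yet known to lie in $U$ at this stage of your outer induction. So the only weight-raising operator available that stays within the single-pair space is $L(-1)$, and at weight $N$ the vectors $L(-1)^{N-2}f(-1)e(-1)\1$ and $L(-1)^{N-3}W$ give only two elements of the $(N-1)$-dimensional span of $\{f(-a)e(-b)\1 : a+b=N\}$; for $N\ge 5$ this cannot suffice. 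The missing idea is to use a nontrivial \emph{positive} mode of $W$ itself: the paper computes $(f(-2)e(-1)\1)_1 f(-n+1)e(-1)\1$ explicitly via the iterate formula \eqref{eq:gene} and shows it equals a nonzero multiple of $f(-n)e(-1)\1$ plus $L_{\mraff}(-1)$-images of known vectors plus elements of $U$; this supplies exactly one genuinely new vector at each weight, after which repeated application of \eqref{eq:Laff-1} fills out the remaining $n-1$ directions of weight $n+1$. Without some such use of the higher modes of $W$, the base case of your induction does not close.
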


\begin{proof} First of all, note that $V(k,0)(0)$ is spanned
by the vectors
\begin{equation}\label{eq:span}
h(-i_1)\cdots h(-i_p)f(-m_1)e(-n_1)\cdots f(-m_s)e(-n_s)\1
\end{equation}
for $i_j, m_j, n_j>0$ and $p,s \ge 0$. Let $U$ be the vertex operator
subalgebra generated by $h(-1)\1$ and $f(-2)e(-1)\1$. Moreover, let
$V(k,0)(0,t)$ be the subspace spanned by the vectors in \eqref{eq:span}
with $s \le t$. We prove by induction on $t$ that $V(k,0)(0,t)$ is a
subspace of $U$. We first consider the case $t=1$.

Since $(h(-1)\1)_n = h(n)$, we have $h(-i_1)\cdots h(-i_p)v \in U$ if $v
\in U$. Thus, in order to show that $V(k,0)(0,1)$ is a subspace of $U$, it
suffices to verify that $f(-m)e(-n)\1 \in U$ for $m,n > 0$. In fact, we
prove by induction on $n$ that $f(-n+i)e(-i)\1 \in U$ for $n \ge 2$ and $1
\le i \le n-1$.

We have $h(1)f(-2)e(-1)\1 = -2f(-1)e(-1)\1$. Hence $f(-1)e(-1)\1 \in U$ and
so $\omega_{\mraff} \in U$. Set $L_{\mraff}(n) = (\omega_{\mraff})_{n+1}$,
that is,
\begin{equation*}
Y(\omega_{\mraff},z)=\sum_{n\in \Z}L_{\mraff}(n)z^{-n-2}.
\end{equation*}
Then
\begin{equation*}
[L_{\mraff}(m),a(n)]=-na(m+n)
\end{equation*}
for $m,n\in\Z$ and $a\in sl_2$. Since $L_{\mraff}(-1)\1 = 0$, it follows
that
\begin{equation}\label{eq:Laff-1}
L_{\mraff}(-1)f(-m+i)e(-i)\1 = (m-i)f(-m-1+i)e(-i)\1 + if(-m+i)e(-i-1)\1
\end{equation}
for any $m,i \in \Z$. In particular,
\begin{equation}\label{eq:Laff-1bis}
L_{\mraff}(-1)f(-1)e(-1)\1 = f(-2)e(-1)\1 + f(-1)e(-2)\1.
\end{equation}
This implies that $f(-1)e(-2)\1 \in U$.

We have shown that $f(-n+i)e(-i)\1 \in U$ for $1 \le i \le n-1$ in the
cases $n = 2, 3$. Now, let $n \ge 3$ and assume that $f(-m+i)e(-i)\1 \in U$
for $2 \le m \le n$ and $1 \le i \le m-1$. We want to show that
$f(-n-1+i)e(-i)\1 \in U$ for $1 \le i \le n$.

We need the following identity
\begin{equation}\label{eq:gene}
(u_lv)_m=\sum_{j\geq 0}(-1)^j \binom{l}{j} u_{l-j}v_{m+j} -\sum_{j\geq
0}(-1)^{l+j} \binom{l}{j} v_{m+l-j}u_{j}
\end{equation}
for $u,v\in V(k,0)$ and $l,m\in \Z$, which is a consequence of the Jacobi
identity of vertex operator algebra. Applying \eqref{eq:gene} to
$(f(-2)e(-1)\1)_1 = ((f(-1)\1)_{-2}e(-1)\1)_1$, we have
\begin{equation*}
(f(-2)e(-1)\1)_1=\sum_{j \ge 0}(j+1)f(-2-j)e(1+j)-\sum_{j \ge
0}(j+1)e(-1-j)f(j).
\end{equation*}
Notice that $e(1+j)f(-n+1)e(-1)\1 = 0$ if $j \ge n-1$ and
$f(j)f(-n+1)e(-1)\1 = 0$ if $j \ge 2$. Thus we have
\begin{equation}\label{eq:fe1}
\begin{split}
& (f(-2)e(-1)\1)_{1}f(-n+1)e(-1)\1\\
& \qquad = \sum_{1 \le j \le n-1} jf(-1-j)e(j)f(-n+1)e(-1)\1\\
& \qquad \quad  -\sum_{j=0,1} (j+1)e(-1-j)f(j)f(-n+1)e(-1)\1.
\end{split}
\end{equation}

We consider the first summation of the right hand side of \eqref{eq:fe1}.
Since $n \ge 3$, we have
\begin{equation*}
\begin{split}
& f(-1-j)e(j)f(-n+1)e(-1)\1\\
& \qquad = f(-1-j)h(-n+1+j)e(-1)\1\\
& \qquad = 2f(-n)e(-1)\1+h(-n+1+j)f(-1-j)e(-1)\1
\end{split}
\end{equation*}
for $1 \leq j \le n-2$ and
\begin{equation*}
\begin{split}
& f(-n)e(n-1)f(-n+1)e(-1)\1\\
& \qquad = f(-n)\big(h(0)+(n-1)k + f(-n+1)e(n-1)\big)e(-1)\1\\
& \qquad = f(-n)h(0)e(-1)\1 + (n-1)kf(-n)e(-1)\1\\
& \qquad = (2+(n-1)k)f(-n)e(-1)\1.
\end{split}
\end{equation*}

As to the second summation of the right hand side of \eqref{eq:fe1}, we
have
\begin{equation*}
\begin{split}
& e(-1)f(0)f(-n+1)e(-1)\1\\
& \qquad = -e(-1)f(-n+1)h(-1)\1\\
& \qquad = -h(-n)h(-1)\1 - f(-n+1)e(-1)h(-1)\1\\
& \qquad = -h(-n)h(-1)\1 + 2f(-n+1)e(-2)\1 - f(-n+1)h(-1)e(-1)\1\\
& \qquad = -h(-n)h(-1)\1 + 2f(-n+1)e(-2)\1\\
& \qquad \quad\, - 2f(-n)e(-1)\1 - h(-1)f(-n+1)e(-1)\1,
\end{split}
\end{equation*}

\begin{equation*}
\begin{split}
& e(-2)f(1)f(-n+1)e(-1)\1\\
& \qquad = ke(-2)f(-n+1)\1\\
& \qquad = kh(-n-1)\1 + kf(-n+1)e(-2)\1.
\end{split}
\end{equation*}

Thus the identity \eqref{eq:fe1} becomes
\begin{equation*}
\begin{split}
& (f(-2)e(-1)\1)_{1}f(-n+1)e(-1)\1\\
& \qquad = \big( (n-1)(n+(n-1)k)+2 \big)f(-n)e(-1)\1 - 2(k+1)f(-n+1)e(-2)\1
+ u
\end{split}
\end{equation*}
for some $u \in U$ by the induction assumption. Furthermore,
\begin{equation*}
f(-n+1)e(-2)\1 = L_{\mraff}(-1)f(-n+1)e(-1)\1 - (n-1)f(-n)e(-1)\1
\end{equation*}
by \eqref{eq:Laff-1}. Therefore,
\begin{equation*}
\begin{split}
& (f(-2)e(-1)\1)_{1}f(-n+1)e(-1)\1\\
& \qquad = \big( (n-1)(nk + n + k + 2) + 2\big)f(-n)e(-1)\1\\
& \qquad \quad\, - 2(k+1)L_{\mraff}(-1)f(-n+1)e(-1)\1 + u.
\end{split}
\end{equation*}
Since $f(-n+1)e(-1)\1 \in U$ by the induction assumption, this implies that
$f(-n)e(-1)\1$ lies in $U$. As a result $f(-n-1+i)e(-i)\1 \in U$ for $1 \le
i \le n$ by \eqref{eq:Laff-1} and the induction on $n$ is complete. Thus
$V(k,0)(0,1)$ is a subspace of $U$.

We now assume that $V(k,0)(0,t)$ is a subspace of $U$ and show that
$V(k,0)(0,t+1)$ is also a subspace of $U$. We use the expression
\begin{equation}\label{eq:uv}
(u_{-m-1}v_{-n-1}\1)_{-1} = u_{-m-1}v_{-n-1} + \sum_{i \ge
0}c_iu_{-m-n-2-i}v_i + \sum_{i \ge 0}d_iv_{-m-n-2-i}u_i
\end{equation}
for $u, v \in V(k,0)$ and $m,n \ge 0$, where $c_i,d_i$ are some constants.
Indeed,
\begin{equation}\label{eq:uv-aux}
\begin{split}
(u_{-m-1}v_{-n-1}\1)_{-1}
&= \sum_{i \ge 0} (-1)^i \binom{-m-1}{i}u_{-m-1-i}(v_{-n-1}\1)_{-1+i}\\
& \quad\, - \sum_{i \ge 0} (-1)^{-m-1+i}\binom{-m-1}{i}
(v_{-n-1}\1)_{-m-2-i}u_i
\end{split}
\end{equation}
by \eqref{eq:gene}. Similarly,
\begin{equation*}
\begin{split}
(v_{-n-1}\1)_{-1+i} &= \sum_{j \ge 0} (-1)^j \binom{-n-1}{j}
v_{-n-1-j}\1_{-1+i+j}\\
& \quad\, - \sum_{j \ge 0} (-1)^{-n-1+j} \binom{-n-1}{j} \1_{-n-2+i-j}v_j.
\end{split}
\end{equation*}
Since $i,j \ge 0$ and $\1_r = \delta_{r,-1}$, we have that $\1_{-1+i+j} =
0$ unless $i=j=0$. Then the first summation of the right hand side of
\eqref{eq:uv-aux} is $u_{-m-1}v_{-n-1} + \sum_{i \ge 0}c_iu_{-m-n-2-i}v_i$
for some constants $c_i$. By a similar argument, we see that the second
summation of the right hand side of \eqref{eq:uv-aux} is $\sum_{i \ge
0}d_iv_{-m-n-2-i}u_i$ for some constants $d_i$. Thus \eqref{eq:uv} holds.

Now, take $u=f(-1)\1$ and $v=e(-1)\1$ to obtain
\begin{equation}\label{eq:fe-1}
\begin{split}
& (f(-m-1)e(-n-1)\1)_{-1}\\
& \qquad = f(-m-1)e(-n-1)\\
& \qquad \quad\, + \sum_{i\ge 0}c_if(-m-n-2-i)e(i) + \sum_{i \ge
0}d_ie(-m-n-2-i)f(i).
\end{split}
\end{equation}

Let $w=f(-m_1)e(-n_1)\cdots f(-m_t)e(-n_t)\1\in V(k,0)(0,t)$. Using the
commutation relation \eqref{eq:affine-commutation} and the property that
$e(i)\1 = f(i)\1 = 0$ for $i \ge 0$, we can show that if we express each
$f(-m-n-2-i)e(i)w$ and $e(-m-n-2-i)f(i)w$ for $i \ge 0$ as linear
combinations of vectors in \eqref{eq:span}, then these vectors are
contained in $V(k,0)(0,t)$. Recall that $V(k,0)(0,t) \subset U$ by the
induction assumption. Then $(f(-m-1)e(-n-1)\1)_{-1}w \in U$ and it follows
from \eqref{eq:fe-1} that $f(-m-1)e(-n-1)w \in U$. Thus $V(k,0)(0,t+1)
\subset U$, as desired.
\end{proof}

\begin{rmk}\label{Rem:another-generattor}
We can replace $f(-2)e(-1)\1$ by $f(-1)e(-2)\1 - f(-2)e(-1)\1$ in Theorem
\ref{generator1}. Indeed,
\begin{equation*}
h(1)(f(-1)e(-2)\1 - f(-2)e(-1)\1) = 2h(-2)\1 + 4f(-1)e(-1)\1.
\end{equation*}
Hence by \eqref{eq:Laff-1bis}, $h(-1)\1$ and $f(-1)e(-2)\1 - f(-2)e(-1)\1$
generate the vertex operator algebra $V(k,0)(0)$. In Section
\ref{Sect:maximal-ideal-tI}, we will consider an automorphism $\theta$ of
the vertex operator algebra $V(k,0)$ of order $2$, which leaves $V(k,0)(0)$
invariant. The above set of generators for $V(k,0)(0)$ is suitable to the
action of the automorphism $\theta$, since $\theta$ acts as $-1$ on those
generators. The vector $f(-1)e(-2)\1 - f(-2)e(-1)\1$ is also closely
related to $W^3$ (see the proof of Theorem \ref{generator2} below).
\end{rmk}

\section{Vertex operator algebra $N_0$ and $W$-algebra}
\label{walgebra}

There are two subalgebras $\wh = \C h \otimes \C[t,t^{-1}] \oplus \C C$ and
$\wh_\ast = ( \oplus_{n \ne 0} \C h \otimes t^n) \oplus \C C$ of
$\widehat{sl}_2$.  The subspace $V_{\wh}(k,0)$ spanned by $h(-i_1) \cdots
h(-i_p)\1$ for $i_1 \ge \cdots \ge i_p \ge 1$ and $p \ge 0$ is a vertex
operator subalgebra of $V(k,0)$ associated to the Heisenberg algebra
$\wh_\ast$ of level $k$ with the Virasoro vector
\begin{equation}\label{eq:omega_gamma}
\omega_{\gamma} = \frac{1}{4k} h(-1)^2\1
\end{equation}
of central charge $1$.

Now, $V(k,0)$ and each $V(k,0)(\lambda)$, $\lambda \in 2\Z$ are completely
reducible as a $V_{\wh}(k,0)$-module. More precisely,
\begin{equation}\label{eq:dec-Heisenberg}
V(k,0) = \oplus_{\lambda\in 2\Z} M_{\wh}(k,\lambda) \otimes N_\lambda,
\end{equation}
\begin{equation}\label{eq:dec-Heisenberg1}
V(k,0)(\lambda)= M_{\wh}(k,\lambda) \otimes N_\lambda,
\end{equation}
where $M_{\wh}(k,\lambda)$ denotes an irreducible highest weight module
for $\wh$ with a highest weight vector $v_\lambda$ such that $h(0)v_\lambda
= \lambda v_\lambda$ and
\begin{equation*}
N_\lambda = \{ v \in V(k,0)\,|\, h(m)v = \lambda\delta_{m,0}v \text{ for
} m \ge 0\}.
\end{equation*}

Note that $M_{\wh}(k,0)$ can be identified  with $V_{\wh}(k,0)$ and $N_0$
is the commutant \cite[Theorem 5.1]{FZ} of $V_{\wh}(k,0)$ in $V(k,0)$. The
commutant $N_0$ is a vertex operator algebra with the Virasoro vector
$\omega = \omega_{\mraff} - \omega_{\gamma}$;
\begin{equation}\label{eq:omega}
\omega = \frac{1}{2k(k+2)} \Big( -kh(-2)\1 -h(-1)^2\1 + 2k e(-1)f(-1)\1
\Big),
\end{equation}
whose central charge is $3k/(k+2) - 1 = 2(k-1)/(k+2)$. Since the Virasoro
vector of $V_{\wh}(k,0)$ is $\omega_{\gamma}$, we have $N_0 = \{ v \in
V(k,0)\,|\, (\omega_\gamma)_0 v = 0\}$ \cite[Theorem 5.2]{FZ}. It is clear
that the weight of $v$ in $N_0$ agrees with that in $V(k,0)$, since
$\omega_1 v = (\omega_{\mraff})_1 v$ for $v \in N_0$.

The dimension of weight $i$ subspace $(N_0)_{(i)}$ is $2$, $4$ and $6$ for
$i=3$, $4$ and $5$, respectively. It is known that there is up to a scalar
multiple, a unique Virasoro primary vector $W^i$ in $(N_0)_{(i)}$ for $i =
3,4,5$ \cite[Section 2]{DLY2}. Here a Virasoro primary vector of weight $i$
means that $\omega_2 W^i = \omega_3 W^i = 0$ and $\omega_1 W^i = i W^i$. As
in \cite{DLY2}, we take
\begin{equation*}\label{eq:W3}
\begin{split}
W^3 &= k^2 h(-3)\1 + 3 k h(-2)h(-1)\1 +
2h(-1)^3\1 - 6k h(-1)e(-1)f(-1)\1 \\
& \quad + 3 k^2e(-2)f(-1)\1 - 3 k^2e(-1)f(-2)\1,
\end{split}
\end{equation*}

\begin{equation*}
\begin{split}
W^4 &= -2 k^2 (k^2+k+1) h(-4)\1 -8 k (k^2+k+1)h(-3)h(-1)\1 -k (5
k^2-6)h(-2)^2\1 \\
& \quad -\!2 k (11 k+6)h(-2)h(-1)^2\1 -\!(11 k+6)h(-1)^4\1 +4
k^2 (6 k-5)h(-2)e(-1)f(-1)\1 \\
& \quad +4 k (11 k+6)h(-1)^2e(-1)f(-1)\1 -4 k^2 (5
k+11)h(-1)e(-2)f(-1)\1 \\
& \quad +4 k^2 (5 k+11)h(-1)e(-1)f(-2)\1 +8 k^2 (k-3)
(k-2)e(-3)f(-1)\1 \\
& \quad -4 k^2 (3 k^2-3 k+8)e(-2)f(-2)\1 -2 k^2 (6
k-5)e(-1)^2f(-1)^2\1 \\
& \quad +8 k^2 (k^2+k+1)e(-1)f(-3)\1,
\end{split}
\end{equation*}

\begin{equation*}
\begin{split}
W^5 &= -2 k^3 (k^2+3 k+5)h(-5)\1 -10 k^2 (k^2+3 k+5)h(-4)h(-1)\1\\
& \quad -5 k^2 (3 k^2-4)h(-3)h(-2)\1 -5 k (7 k^2+12 k+16)h(-3)h(-1)^2\1\\
& \quad -15 k (3 k^2-4)h(-2)^2h(-1)\1 -5 k (19 k+12)h(-2)h(-1)^3\1 -2 (19
k+12)h(-1)^5\1 \\
& \quad +10 k^2 (4 k^2-7 k+8)h(-3)e(-1)f(-1)\1 +20 k^2 (10
k-7)h(-2)h(-1)e(-1)f(-1)\1 \\
& \quad +10 k (19 k+12)h(-1)^3e(-1)f(-1)\1 -5 k^2 (11
k^2-14 k+12)h(-2)e(-2)f(-1)\1 \\
& \quad -5 k^2 (17 k+64)h(-1)^2e(-2)f(-1)\1 +15 k^2 (3
k^2-4)h(-2)e(-1)f(-2)\1 \\
& \quad +5 k^2 (17 k+64)h(-1)^2e(-1)f(-2)\1 +30 k^2 (k-4)
(k-3)h(-1)e(-3)f(-1)\1 \\
& \quad -40 k^2 (k^2+3 k+5)h(-1)e(-2)f(-2)\1 -10 k^2 (10
k-7)h(-1)e(-1)^2f(-1)^2\1 \\
& \quad +10 k^2 (3 k^2+19 k+8)h(-1)e(-1)f(-3)\1 -10 k^3
(k-4) (k-3)e(-4)f(-1)\1 \\
& \quad +20 k^3 (k-4) (k-3)e(-3)f(-2)\1 +5 k^3 (10
k-7)e(-2)e(-1)f(-1)^2\1 \\
& \quad -10 k^3 (2 k^2-4 k+17)e(-2)f(-3)\1 -5 k^3 (10
k-7)e(-1)^2f(-2)f(-1)\1 \\
& \quad +10 k^3 (k^2+3 k+5)e(-1)f(-4)\1.
\end{split}
\end{equation*}

Denote by $\tW$ the subalgebra of $N_0$ generated by $\omega$, $W^3$, $W^4$
and $W^5$. Then $\tW$ coincides with $W(2,3,4,5)$ of \cite{BEHHH}. It is in
fact generated by $\omega$ and $W^3$ \cite{DLY2}.

The following theorem is essentially a conjecture in \cite{DLY2}.

\begin{thm}\label{generator2} The vertex operator
algebra $N_0$ is generated by $\omega$ and $W^3$. In particular, $N_0$
coincides with $\tW$ or $W(2,3,4,5)$.
\end{thm}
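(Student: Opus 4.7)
The strategy is to show that the subVOA $U$ of $V(k,0)$ generated by $h(-1)\1$, $\omega$, and $W^3$ coincides with the full subalgebra $V(k,0)(0)$. Once this is done, the theorem will follow from the tensor-product structure: $h(-1)\1$ generates the Heisenberg subalgebra $V_{\wh}(k,0)$, while $\omega$ and $W^3$ generate $\tW\subseteq N_0$; the vertex operators of $V_{\wh}(k,0)$ and $N_0$ commute inside $V(k,0)$ with $V_{\wh}(k,0)\cap N_0 = \C\1$, so $U$ is identified with $V_{\wh}(k,0)\otimes\tW$. Combined with the decomposition $V(k,0)(0) = V_{\wh}(k,0)\otimes N_0$ from \eqref{eq:dec-Heisenberg1}, the equality $U = V(k,0)(0)$ forces $\tW = N_0$, for instance by comparing graded dimensions.

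By Theorem \ref{generator1}, to conclude $U = V(k,0)(0)$ it suffices to exhibit $f(-2)e(-1)\1$ as an element of $U$. The plan is to extract it from $\omega$ and $W^3$ modulo Heisenberg descendants. First, solving \eqref{eq:omega} for $e(-1)f(-1)\1$ puts this vector in $U$, since the right-hand side is a linear combination of $k(k+2)\omega$ with the Heisenberg vectors $h(-2)\1$ and $h(-1)^2\1$. Since the mode $(h(-1)\1)_{-1} = h(-1)$ preserves $U$, we also have $h(-1)e(-1)f(-1)\1 \in U$. Plugging these two vectors into the explicit formula for $W^3$ stated before the theorem, one solves for the antisymmetric combination
\[
e(-2)f(-1)\1 - e(-1)f(-2)\1 = \tfrac{1}{3k^2}\bigl( W^3 - k^2 h(-3)\1 - 3k h(-2)h(-1)\1 - 2 h(-1)^3\1 + 6k\,h(-1)e(-1)f(-1)\1 \bigr),
\]
which therefore lies in $U$.

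For the symmetric combination, I would use that $\omega_\gamma = \tfrac{1}{4k}h(-1)^2\1 \in V_{\wh}(k,0) \subseteq U$, so $\omega_{\mraff} = \omega + \omega_\gamma \in U$ and consequently $L_{\mraff}(-1)$ preserves $U$. Formula \eqref{eq:Laff-1bis}, together with the commutator $[e(-1),f(-1)] = h(-2)$, yields
\[
L_{\mraff}(-1)\,e(-1)f(-1)\1 = e(-2)f(-1)\1 + e(-1)f(-2)\1 \in U.
\]
Combining the two displayed identities places both $e(-2)f(-1)\1$ and $e(-1)f(-2)\1$ in $U$, and using $[e(-1), f(-2)] = h(-3)$ we obtain $f(-2)e(-1)\1 = e(-1)f(-2)\1 - h(-3)\1 \in U$, as needed.

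Conceptually the only delicate step is the identification $U = V_{\wh}(k,0)\otimes\tW$ in the opening paragraph, which rests on the mutual locality of $V_{\wh}(k,0)$ and $N_0$ and their trivial intersection; it is what reduces the generation problem inside $N_0$ to the more tractable generation problem inside $V(k,0)(0)$ handled by Theorem \ref{generator1}. The remaining extraction of $f(-2)e(-1)\1$ from $\omega$ and $W^3$ is pure linear algebra on weight $\le 3$ components, and is motivated precisely by the observation (anticipated in Remark \ref{Rem:another-generattor}) that the vector $f(-1)e(-2)\1 - f(-2)e(-1)\1$ appears as the genuinely new, non-Heisenberg piece of $W^3$.
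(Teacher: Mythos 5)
Your proposal is correct and follows essentially the same route as the paper: reduce to showing that $h(-1)\1$, $\omega$, $W^3$ generate $V(k,0)(0)$ by extracting $f(-2)e(-1)\1$ from the antisymmetric part of $W^3$ together with $L_{\mraff}(-1)f(-1)e(-1)\1$ (i.e.\ \eqref{eq:Laff-1bis}), then invoke Theorem \ref{generator1} and the commutativity of the modes of $M_{\wh}(k,0)$ and $N_0$ to conclude. Your version merely spells out the linear algebra and the tensor-product identification $U = M_{\wh}(k,0)\otimes\tW$ in more detail than the paper does.
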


\begin{proof}
We first show that $V(k,0)(0)= M_{\wh}(k,0) \otimes N_0$ is generated by
$h(-1)\1$, $\omega$ and $W^3$. Let $U$ be the vertex operator subalgebra
generated by $h(-1)\1$, $\omega$ and $W^3$. Then $U$ contains
$f(-1)e(-1)\1$ and $\omega_{\mraff}$. Moreover, we see from the expression
of $W^3$ that $U$ contains $f(-1)e(-2)\1 - f(-2)e(-1)\1$. Hence
$f(-2)e(-1)\1 \in U$ by \eqref{eq:Laff-1bis}, and so $U$ is equal to
$V(k,0)(0)$ by Theorem \ref{generator1}.

Now, $Y(u,z_1)Y(v,z_2)=Y(v,z_2)Y(u,z_1)$ for $u \in M_{\wh}(k,0)$ and $v
\in N_0$. Since $h(-1)\1 \in M_{\wh}(k,0)$ and $\omega, W^3 \in N_0$, we
conclude that $N_0$ is generated by $\omega$ and $W^3$.
\end{proof}

\begin{rmk}\label{Rem:generation}
Since $W^3_3W^3 = 36k^3(k-2)(k+2)(3k+4)\omega$, the vector $W^3$ in fact
generates the vertex operator algebra $N_0$ by Theorem \ref{generator2} if
$k \ge 3$. In the case $k = 2$, $W^3$ is contained in a unique maximal
ideal of $N_0$ \cite[Remark 2.3]{DLY2}.
\end{rmk}

It is shown in \cite[Lemma 2.6]{DLY2} that the Zhu algebra $A(\tW)$
\cite{Zhu} is a commutative associative algebra. Thus the Zhu algebra
$A(N_0)$ is commutative and every simple $A(N_0)$-module is
one-dimensional.

\section{The maximal ideal $\tI$ of $N_0$ and $K_0$
}\label{Sect:maximal-ideal-tI} The vertex operator algebra $V(k,0)$ has a
unique maximal ideal $\J$, which is generated by a weight $k+1$ vector
$e(-1)^{k+1}\1$ \cite{K}. The quotient algebra $L(k,0) = V(k,0)/\J$ is the
simple vertex operator algebra associated to an affine Lie algebra
$\widehat{sl}_2$ of type $A_1^{(1)}$ with level $k$. The Heisenberg vertex
operator algebra $V_{\wh}(k,0)$ is again a simple subalgebra of $L(k,0)$
and $L(k,0)$ is a completely reducible $V_{\wh}(k,0)$-module. We have a
decomposition
\begin{equation}
L(k,0) = \oplus_{\lambda \in 2\Z} M_{\wh}(k,\lambda) \otimes K_\lambda
\end{equation}
as modules for $V_{\wh}(k,0)$, where
\begin{equation*}
K_\lambda = \{v \in L(k,0)\,|\, h(m)v = \lambda\delta_{m,0}v \text{ for } m
\ge 0\}.
\end{equation*}
Note that $M_{\wh}(k,0) = V_{\wh}(k,0)$ and $K_0$ is the commutant of
$V_{\wh}(k,0)$ in $L(k,0)$.

Similarly, $\J$ is completely reducible as a $V_{\wh}(k,0)$-module. Hence by
\eqref{eq:dec-Heisenberg},
\begin{equation*}
\J = \oplus_\lambda M_{\wh}(k,\lambda) \otimes (\J \cap N_\lambda).
\end{equation*}
In particular, $\tI = \J \cap N_0$ is an ideal of $N_0$ and $K_0 \cong
N_0/\tI$. It is proved in \cite[Lemma 3.1]{DLY2} that $\tI$ is the unique
maximal ideal of $N_0.$ Thus $K_0$ is a simple vertex operator algebra.

For short we still use $\omega_{\mraff}$, $\omega_\gamma$, $\omega$, $W^3$,
$W^4$ and $W^5$ of $V(k,0)$ to denote their images in $L(k,0) = V(k,0)/\J$.
Let $\W$ be the subalgebra of $K_0$ generated by $\omega$, $W^3$, $W^4$ and
$W^5$. Thus $\W$ is a homomorphic image of $\tW$. The following result is a
direct consequence of Theorem \ref{generator2}

\begin{thm}\label{generator3} The simple vertex operator algebra $K_0$
is generated by $\omega$ and $W^3$. In particular, $K_0$ coincides with
$\W$.
\end{thm}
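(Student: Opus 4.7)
The plan is to deduce this directly from Theorem \ref{generator2} by pushing generators through the quotient map $\pi \colon N_0 \to N_0/\tI \cong K_0$. The key point is that any vertex operator algebra homomorphism sends a generating set to a generating set of the image, so once we know $N_0$ is generated (as a vertex operator algebra) by $\omega$ and $W^3$, the same holds for any homomorphic image of $N_0$.

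Concretely, I would first recall that $K_0 \cong N_0/\tI$ and that, by the convention fixed just before the theorem, $\omega$ and $W^3$ also denote the images in $L(k,0) = V(k,0)/\J$ of the corresponding vectors in $V(k,0)$, which coincide with $\pi(\omega)$ and $\pi(W^3)$. By Theorem \ref{generator2}, every element of $N_0$ is obtained from $\omega$ and $W^3$ by iterated applications of the modes $u_n$ and linear combinations. Applying $\pi$ and using that $\pi$ is a surjective homomorphism of vertex operator algebras (so $\pi(u_n v) = \pi(u)_n \pi(v)$), every element of $K_0$ is obtained in the same way from $\pi(\omega)$ and $\pi(W^3)$. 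This shows $K_0$ is generated by $\omega$ and $W^3$.

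For the second assertion, note that $\W$ is by definition the subalgebra of $K_0$ generated by $\omega, W^3, W^4, W^5$, so in particular $\W$ contains $\omega$ and $W^3$. Combined with the fact just established, that $\omega$ and $W^3$ already generate all of $K_0$, we conclude $\W = K_0$.

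There is no real obstacle here: the argument is a formal consequence of Theorem \ref{generator2} together with the fact that $K_0$ is a homomorphic image of $N_0$, which is exactly why the theorem is advertised in the paper as a direct corollary. The only small subtlety worth mentioning is to make sure the identification of $\omega, W^3 \in K_0$ with the images under $\pi$ of the corresponding generators of $N_0$ is made explicit, so that Theorem \ref{generator2} can be invoked verbatim.
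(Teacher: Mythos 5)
Your proposal is correct and matches the paper exactly: the paper offers no separate proof, stating only that the theorem is a direct consequence of Theorem \ref{generator2}, and the intended argument is precisely yours — the surjection $N_0 \to N_0/\tI \cong K_0$ carries the generating set $\{\omega, W^3\}$ to a generating set of $K_0$. Your explicit remark about identifying $\omega, W^3 \in K_0$ with the images of the corresponding vectors of $N_0$ is a reasonable bit of added care, consistent with the notational convention the paper fixes just before the theorem.
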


We remark that this theorem in the case $k\le 6$ has been obtained in
\cite{DLY2} and was conjectured for general $k$. As mentioned in Remark
\ref{Rem:generation}, $K_0$ is generated by $W^3$ if $k \ge 3$, while $W^3
= 0$ in $K_0$ if $k =2$.

Next, we study the ideal $\tI$ of $N_0$ in detail. For this purpose we
recall that the Lie algebra $sl_2$ has an involution $\theta$ given by $h
\mapsto -h$, $e \mapsto f$, $f \mapsto e$. The involution $\theta$ lifts to
an automorphism of the vertex operator algebra $V(k,0)$ of order $2$
naturally. We still denote it by $\theta.$ Then $\theta \omega = \omega$,
$\theta W^3 = -W^3$,  $\theta W^4 = W^4$ and $\theta W^5 = -W^5$.

It is proved in \cite[Theorem 3.2]{DLY2} that $f(0)^{k+1}e(-1)^{k+1}\1 \in
\tI.$ The following theorem is another conjecture in \cite{DLY2}.

\begin{thm}\label{Conj:ideal-generator}
$(1)$ The unique maximal ideal $\tI$ of $N_0$ is generated by a weight
$k+1$ vector $f(0)^{k+1}e(-1)^{k+1}\1$.

$(2)$ The automorphism $\theta$ acts as $(-1)^{k+1}$ on
$f(0)^{k+1}e(-1)^{k+1}\1$.
\end{thm}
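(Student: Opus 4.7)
\emph{Proof plan.} Set $n = 2(k+1)$ and $u = f(0)^{k+1}e(-1)^{k+1}\1$. Under the $sl_2$-action on $V(k,0)$ by the zero modes $e(0), f(0), h(0)$, the vector $e(-1)^{k+1}\1$ is a highest weight vector of weight $n$ (since $e(0)e(-1)^{k+1}\1 = 0$ and $h(0)e(-1)^{k+1}\1 = n\,e(-1)^{k+1}\1$), generating an irreducible $sl_2$-submodule $L \cong L(n) \subset V(k,0)$ with basis $v_j := f(0)^j e(-1)^{k+1}\1/j!$ for $0\le j\le n$; our vector $u = (k+1)!\,v_{k+1}$ is thus a scalar multiple of the middle-weight vector.

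For part $(1)$, let $\I$ be the $N_0$-ideal generated by $u$, so $\I \subseteq \tI$ by \cite[Theorem 3.2]{DLY2}. The classical $sl_2$-identity $e^{k+1}v_{k+1} = \frac{n!}{(k+1)!}v_0$ in $L$ yields $e(0)^{k+1}u = n!\,e(-1)^{k+1}\1$, so $u$ generates $\J$ as a $V(k,0)$-ideal. Every $v \in \tI \subseteq \J$ is therefore a finite sum of iterated $V(k,0)$-mode products on $u$; decomposing each mode factor by \eqref{eq:V-dec} and retaining only the summands of total $h(0)$-charge zero, we apply the Jacobi identity \eqref{eq:gene} iteratively to pair oppositely charged modes. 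For $a$ of charge $+\mu$ and $b$ of charge $-\mu$, the element $a_l b$ lies in $V(k,0)(0)$, and \eqref{eq:gene} expresses $(a_l b)_m$ as a sum of compositions $a_p b_q$ and $b_r a_s$; an induction on the number of nonzero-charge modes in a mode product then rewrites $v$ as an element of the $V(k,0)(0)$-ideal of $V(k,0)(0)$ generated by $u$. Finally, the coset tensor decomposition $V(k,0)(0) = M_{\wh}(k,0) \otimes N_0$ from \eqref{eq:dec-Heisenberg1}, together with the tensor-product vertex operator formula $Y(a\otimes b, z) = Y(a,z) \otimes Y(b,z)$, identifies this $V(k,0)(0)$-ideal with $M_{\wh}(k,0)\otimes\I$; intersecting with $\{\1\}\otimes N_0$ yields $v \in \I$, so $\tI \subseteq \I$ and hence $\tI = \I$.

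For part $(2)$, apply $\theta$ directly: $\theta(u) = e(0)^{k+1}f(-1)^{k+1}\1$, and we must show this equals $(-1)^{k+1}u$. The subspace of $V(k,0)$ of affine weight $k+1$ and $h(0)$-eigenvalue $-n$ is one-dimensional, spanned by $f(-1)^{k+1}\1$ and also containing the lowest weight vector $v_n \in L$, so $f(-1)^{k+1}\1 = \beta\,v_n$ for a unique $\beta \ne 0$. To compute $\beta$, apply the Weyl-group automorphism $s = \exp(-f(0))\exp(e(0))\exp(-f(0))$ of $V(k,0)$ (well-defined since $e(0), f(0)$ act locally nilpotently on $V(k,0)$) to $v_0$ in two ways: as a VOA automorphism sending $e(-1)\1 \mapsto -f(-1)\1$, one has $s(v_0) = s(e(-1)^{k+1}\1) = (-1)^{k+1}f(-1)^{k+1}\1 = (-1)^{k+1}\beta\,v_n$; while a direct binomial-sum computation inside $L$, using $\sum_{m=0}^{n-i}(-1)^m\binom{n-i}{m} = 0$ for $i<n$, gives $s(v_0) = (-1)^n v_n$. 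Equating and noting $n - (k+1) = k+1$, we obtain $\beta = (-1)^{k+1}$, so $f(-1)^{k+1}\1 = (-1)^{k+1}v_n$. Applying $e(0)^{k+1}$ and using the $sl_2$-identity $e^{k+1}v_n = (k+1)!\,v_{k+1}$, we conclude $e(0)^{k+1}f(-1)^{k+1}\1 = (-1)^{k+1}(k+1)!v_{k+1} = (-1)^{k+1}u$, as required.

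The main technical step is the Jacobi-identity rearrangement in part $(1)$: carefully iterating \eqref{eq:gene} to reduce any net-$h(0)$-charge-zero iterated $V(k,0)$-mode product on $u$ to iterated $V(k,0)(0)$-mode products on $u$ is where most of the work lies, while the $sl_2$ and Weyl-element computations of part $(2)$ are largely formal consequences.
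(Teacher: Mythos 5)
Your overall skeleton for part (1) — $u=f(0)^{k+1}e(-1)^{k+1}\1$ generates $\J$ by $sl_2$-theory, then reduce to charge-zero modes, then use the tensor decomposition \eqref{eq:dec-Heisenberg1} — matches the paper, but the step you yourself flag as ``where most of the work lies'' is a genuine gap, and the mechanism you describe for closing it does not work as stated. The identity \eqref{eq:gene} runs in the wrong direction for your purposes: it expresses the modes $(a_lb)_m$ of a charge-zero product as (infinite) sums of compositions $a_pb_q$ and $b_ra_s$, whereas you need the converse, namely that an arbitrary iterated mode product $a^{(1)}_{n_1}\cdots a^{(r)}_{n_r}u$ of total charge zero can be rewritten using only modes of charge-zero elements applied to $u$. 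Inverting \eqref{eq:gene} is exactly the content of the weak-associativity argument behind the fact that a cyclic weak module is spanned by \emph{single} modes of its generator, and your sketch also does not address mode products whose charges do not pair off two at a time (e.g.\ charges $+4,-2,-2$). The paper bypasses all of this by citing \cite[Corollary 4.2]{DM} or \cite[Proposition 4.1]{L}: since $\J$ is generated by the single vector $u$, it is already spanned by $a_nu$ with $a\in V(k,0)$ and $n\in\Z$; then $h(0)(a_nu)=(h(0)a)_nu$ and \eqref{eq:V-dec} immediately give $\J\cap V(k,0)(0)=\spn\{a_nu : a\in V(k,0)(0),\,n\in\Z\}$, after which your tensor-decomposition endgame goes through verbatim. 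You should either invoke that lemma or supply the full weak-associativity induction; as written, the central reduction is asserted rather than proved.

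Part (2) is correct and is a genuinely different route to the same key constant. The paper pins down the coefficient in $f(0)^{2(k+1)}e(-1)^{k+1}\1=c\,f(-1)^{k+1}\1$ by a direct $(\ad f(0))^{2i}$ computation and then transports it back with $e(0)^{k+1}$; you instead use the one-dimensionality of the weight-$(k+1)$, charge-$(-2(k+1))$ subspace together with the Weyl element $s=\exp(-f(0))\exp(e(0))\exp(-f(0))$, computed once as a vertex operator algebra automorphism (giving $(-1)^{k+1}f(-1)^{k+1}\1$) and once inside the irreducible $sl_2$-module (giving $(-1)^nv_n$). Both yield $f(-1)^{k+1}\1=(-1)^{k+1}v_n$ and hence $\theta u=(-1)^{k+1}u$; your version trades the explicit combinatorial constant $a_i=(2i)!/2^i$ for the standard fact $s(v_0)=(-1)^nv_n$, which is a fair exchange, though you should note that well-definedness of $s$ on $V(k,0)$ uses the local finiteness of the $sl_2\,$-action on each graded piece.
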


\begin{proof}
We use the finite dimensional representation theory for the simple Lie
algebra $sl_2$. In fact, $V(k,0)$ is an $sl_2$-module where $a \in sl_2$
acts as $a(0)$. Each weight subspace of the vertex operator algebra
$V(k,0)$ is a finite dimensional $sl_2$-module and $V(k,0)$ is completely
reducible as a module for $sl_2$.

We first show the assertion (1). Consider the $sl_2$-submodule $X$ of
$V(k,0)$ generated by $e(-1)^{k+1}\1$. We have $e(0)e(-1)^{k+1}\1 =0$ and
$h(0)e(-1)^{k+1}\1 = 2(k+1)e(-1)^{k+1}\1$, that is, $e(-1)^{k+1}\1$ is a
highest weight vector with highest weight $2(k+1)$ for $sl_2$. Then $X$ is
an irreducible $sl_2$-module with basis $f(0)^{i}e(-1)^{k+1}\1$, $0 \le i
\le 2(k+1)$ from the representation theory of $sl_2$. This implies that the
ideal $\J$ of the vertex operator algebra $V(k,0)$ can be generated by any
nonzero vector in $X$. In particular, $\J$ is generated by
$f(0)^{k+1}e(-1)^{k+1}\1$. Then $\J$ is spanned by $u_n
f(0)^{k+1}e(-1)^{k+1}\1$ for $u \in V(k,0)$ and $n \in \Z$ by
\cite[Corollary 4.2]{DM} or \cite[Proposition 4.1]{L}. Since $[h(0),
Y(u,z)] = Y(h(0)u,z)$ and $h(0)f(0)^{k+1}e(-1)^{k+1}\1 = 0$, it follows
that
\begin{equation*}
h(0)u_n f(0)^{k+1}e(-1)^{k+1}\1 = (h(0)u)_n f(0)^{k+1}e(-1)^{k+1}\1.
\end{equation*}
Thus we see from \eqref{eq:V-dec} that $u_n f(0)^{k+1}e(-1)^{k+1}\1 \in \J
\cap V(k,0)(0)$ if and only if $u \in V(k,0)(0)$. Let $u = v\otimes w\in
V(k,0)(0) = M_{\wh}(k,0) \otimes N_0$ with $v \in M_{\wh}(k,0)$ and $w\in
N_0$. Then $Y(u,z)=Y(v,z)\otimes Y(w,z)$ acts on $M_{\wh}(k,0) \otimes
N_0$. As a result we have that $\tI$ is spanned by $w_n
f(0)^{k+1}e(-1)^{k+1}\1$ for $w \in N_0$ and $n \in \Z$. That is, the ideal
$\tI$ of the vertex operator algebra $N_0$ is generated by
$f(0)^{k+1}e(-1)^{k+1}\1$. Thus (1) holds.

As to the assertion (2), we prove a more general result here:
\begin{equation}\label{eq:theta-action}
\theta(f(0)^ie(-1)^i\1)=(-1)^if(0)^ie(-1)^i\1
\end{equation}
for any positive integer $i$. Let $U$ be the irreducible $sl_2$-submodule
of $V(k,0)$ generated by the highest weight vector $e(-1)^i\1$ with highest
weight $2i$ for $sl_2$. Then $U$ has a basis $f(0)^je(-1)^i\1$, $0\le j\le
2i$. We express $f(0)^{2i}e(-1)^i\1$ as a linear combination of the vectors
of the form \eqref{eq:V-basis}. Let $v$ be a vector of the form
\eqref{eq:V-basis}. Then $h(0)v = 2(q-r)v$ and
\begin{equation*}
L_{\mraff}(0)v = (i_1 + \cdots + i_p + j_1 + \cdots + j_q + m_1 + \cdots +
m_r)v.
\end{equation*}
Since $h(0) f(0)^{2i}e(-1)^i\1 = -2if(0)^{2i}e(-1)^i\1$ and
$L_{\mraff}(0)f(0)^{2i}e(-1)^i\1 = if(0)^{2i}e(-1)^i\1$, we obtain that
\begin{equation}\label{eq:fecf}
f(0)^{2i}e(-1)^i\1=c_if(-1)^i\1
\end{equation}
for some constant $c_i$. In fact, this can be also seen in a different way.
We consider the inner derivation $(\ad f(0))x = [f(0),x]$. Note that
$f(0)\1 = 0$, $(\ad f(0)) e(-1) = -h(-1)$, $(\ad f(0))^2 e(-1) = -2f(-1)$
and $(\ad f(0))^s e(-1) = 0$ for $s \ge 3$. Hence
\begin{align*}
f(0)^{2i}e(-1)^i\1 &= \big((\ad f(0))^{2i}e(-1)^i\big)\1\\
&= a_i \big((\ad f(0))^2 e(-1)\big)^i\1
\end{align*}
for some positive integer $a_i$. That is, $c_i = (-1)^i 2^i a_i$. More
precisely, we have
\begin{equation*}
a_i = \prod_{m=0}^{i-1} \binom{2i-2m}{2} = \frac{(2i)!}{2^i}.
\end{equation*}

Let $j$ be an integer such that $0\le j\le 2i$. Then
\begin{equation*}
e(0)^jf(0)^{2i}e(-1)^i\1 = c_ie(0)^jf(-1)^i\1
\end{equation*}
by \eqref{eq:fecf}. Moreover, one can obtain from the highest weight module
structure for $sl_2$ that
\begin{equation*}
e(0)^jf(0)^{2i}e(-1)^i\1=\frac{(2i)!j!}{(2i-j)!}f(0)^{2i-j}e(-1)^i\1.
\end{equation*}

In the case $j = i$, the above two equations imply that
\begin{equation*}
\begin{split}
f(0)^{i}e(-1)^i\1 &= \frac{c_i}{(2i)!} e(0)^if(-1)^i\1\\
&= (-1)^i e(0)^if(-1)^i\1.
\end{split}
\end{equation*}
Since $\theta(f(0)^ie(-1)^i\1) = e(0)^if(-1)^i\1$, \eqref{eq:theta-action}
holds.
\end{proof}

\begin{rmk}\label{Rem:final}
The vector $f(0)^{k+1}e(-1)^{k+1}\1$ is a scalar multiple of $W^3$, $W^4$
or $W^5$ in the case $k = 2$, $3$ or $4$ \cite[Section 5]{DLY2}.
\end{rmk}

\section*{Acknowledgments}
Chongying Dong was partially supported by NSF grants
and a research grant from University of California at Santa Cruz, Ching
Hung Lam was partially supported by  NSC grant 95-2115-M-006-013-MY2 of Taiwan, Hiromichi Yamada was partially
supported by JSPS Grant-in-Aid for Scientific Research No. 17540016.


\begin{thebibliography}{99}
\bibitem{BEHHH}
R. Blumenhagen, W. Eholzer, A. Honecker, K. Hornfeck and R. H\"{u}bel,
Coset realization of unifying $W$-algebras, \emph{Internat. J. Modern
Phys.} \textbf{A10} (1995), 2367--2430.

\bibitem{DLY1}
C. Dong, C.H. Lam and H. Yamada, $W$-algebras in lattice vertex operator
algebras, in \emph{Lie Theory and Its Applications in Physics VII}, ed. by
H.-D. Doebner and V. K. Dobrev, Proc. of the VII International Workshop,
Varna, Bulgaria, 2007, \emph{Bulgarian J. Phys.} \textbf{35} supplement
(2008), 25--35.

\bibitem{DLY2} C. Dong, C.H. Lam and H. Yamada, W-algebras related to
parafermion algebras, arXiv:0809.3630, to appear in \emph{J. Algebra}.

\bibitem{DL}
C. Dong and J. Lepowsky, \emph{Generalized Vertex Algebras and Relative
Vertex Operators}, Progress in Math., Vol. 112, Birkh\"{a}user, Boston,
1993.

\bibitem{DM}
C. Dong and G. Mason, On quantum Galois theory,  \emph{Duke Math. J.}
\textbf{86} (1997), 305--321.

\bibitem{FLM} I. B. Frenkel, J. Lepowsky and A. Meurman,
\emph{Vertex Operator Algebras and the Monster}, Pure and Applied Math.,
Vol. 134, Academic Press, Boston, 1988.

\bibitem{FZ}
I. B. Frenkel and Y.-C. Zhu, Vertex operator algebras associated to
representations of affine and Virasoro algebras, \emph{Duke Math. J.}
\textbf{66} (1992), 123--168.

\bibitem{GQ}
D. Gepner and Z. Qiu, Modular invariant partition functions for
parafermionic field theories, \emph{Nucl. Phys.} \textbf{B285} (1987),
423--453.

\bibitem{GKO} P. Goddard, A. Kent and D. Olive, Unitary
representations of the Virasoro and super-Virasoro algebras, \emph{Comm.
Math. Phys.} \textbf{103} (1986), 105-119.

\bibitem{H}
K. Hornfeck, $W$-algebras with set of primary fields of dimensions
$(3,4,5)$ and $(3,4,5,6)$, \emph{Nucl. Phys.} \textbf{B407} (1993),
237--246.

\bibitem{K}
V. G. Kac, \emph{Infinite-dimensional Lie Algebras}, 3rd ed., Cambridge
University Press, Cambridge, 1990.

\bibitem{KL} Y. Kawahigashi and R. Longo, Classification of local
conformal nets. Case $c<1,$ \emph{Ann. of Math.} \textbf{160} (2004),
493--522.

\bibitem{LY} C.H. Lam and H. Yamada, Decomposition of the lattice vertex
operator algebra $V_{\sqrt{2}A_l}$, \emph{J. Algebra} \textbf{272} (2004),
614--624.

\bibitem{LL}
J. Lepowsky and H.-S. Li, \emph{Introduction to Vertex Operator Algebras
and Their Representations}, Progress in Math., Vol. 227, Birkh\"auser,
Boston, 2004.

\bibitem{LP} J. Lepowsky and M. Primc, Structure of the standard
modules for the affine Lie algebra $A_1^{(1)},$ \emph{Contemporary Math.}
\textbf{46}, 1985.

\bibitem{LW1} J. Lepowsky and R. L. Wilson, A new family
of algebras underlying the Rogers-Ramanujan identities and generalizations,
\emph{Proc. Natl. Acad. Sci. USA} \textbf{78} (1981), 7245-7248.

\bibitem{LW2} J. Lepowsky and R. L. Wilson, The structure of
standard modules, I: Universal algebras and the Rogers-Ramanujan
identities, \emph{Invent. Math.} \textbf{77} (1984), 199-290.

\bibitem{L} H.-S. Li, An approach to tensor product theory for
representations of a vertex operator algebra, Ph.D. thesis, Rutgers
University, 1994.

\bibitem{Wang}
W. Wang, Rationality of Virasoro vertex operator algebras, \emph{Internat.
Math. Res. Notices} \textbf{7} (1993), 197--211.

\bibitem{X1}
F. Xu, Algebraic coset conformal field theories II, \emph{Publ. Res. Inst.
Math. Sci.} \textbf{35} (1999), 795--824.

\bibitem{X2}
F. Xu, Algebraic coset conformal field theories, \emph{Commnu. Math. Phys.}
\textbf{211} (2000), 1--43.

\bibitem{ZF} A. B. Zamolodchikov and V. A. Fateev,
Nonlocal (parafermion) currents in two-dimensional conformal quantum field
theory and self-dual critical points in $Z_N$-symmetric statistical
systems, \emph{Sov. Phys. JETP} \textbf{62} (1985), 215-225.
\bibitem{Zhu}
Y.-C. Zhu, Modular invariance of characters of vertex operator algebras,
\emph{J. Amer. Math. Soc.} \textbf{9} (1996), 237--302.
\end{thebibliography}
\end{document}